\numberwithin{equation}{section}
\theoremstyle{plain}
\newtheorem{theorem}{Theorem}[section]
\newtheorem{lemma}[theorem]{Lemma}
\theoremstyle{definition}
\newtheorem{definition}[theorem]{Definition}
 \newtheorem*{problem}{Problem}
\newtheorem*{question}{Question}
\newtheorem{example}[theorem]{Example}
\newtheorem{case[theorem]}{Case}
\theoremstyle{remark}
\numberwithin{equation}{section}
\begin{document}

\title{On   Gabor orthonormal bases over finite prime fields}

%    Information for first author

\author{A. Iosevich, M. Kolountzakis, Yu. Lyubarskii, A. Mayeli and J. Pakianathan}

\date{\today}

\address{Alex Iosevich, Department of Mathematics, University of Rochester, Rochester, NY,USA}
\email{iosevich@math.rochester.edu}
\address{Mihalis Kolountzakis,, Department of Mathematics and Applied Mathematics, 
University of Crete, Crete
Greece}
\email{kolount@gmail.com}
\address{Yu. Lyubarskii, Department of Mathematical Sciences, Norwegian University of Science and Technology, Trohdheim, Norway} 
\email{yurii.lyubarskii@math.ntnu.no}
\address{Azita Mayeli, Department of Mathematics and Computer Science, Queensborough and The Graduate Center, City University of New York, USA} 
\email{AMayeli@gc.cuny.edu}
\address{Jonathan Pakianathan, Department of Mathematics, University of Rochester, Rochester, NY, USA}
\email{jonpak@math.rochester.edu}

\thanks{The work of the first and fifth listed authors was partially supported by the NSA Grant H98230-15-1-0319. The work of the second author has been supported by grant No 4725 of the University of Crete. The work of fourth author was partially supported by the PSC-CUNY Grant  B $\sharp$  69625-00 47.}

\maketitle

\begin{abstract} We study   Gabor orthonormal  windows in $L^2({\Bbb Z}_p^d)$ for translation and modulation sets $A$ and $B$, respectively, where $p$ is prime and $d\geq 2$. We prove that for a set $E\subset \Bbb Z_p^d$, the indicator function  $1_E$  is a Gabor window if and only if  $E$ tiles and is spectral. Moreover, we prove that for any function  $g:\Bbb Z_p^d\to \Bbb C$ with support $E$, 
if   the size of $E$ coincides with the size of the modulation set $B$ or if $g$ is positive,  then  $g$ is a unimodular function, i.e., $|g|=c1_E$, for some constant $c>0$,  and $E$ tiles and is spectral.  We also prove the existence of a Gabor window $g$ with  full support where neither $|g|$ nor $|\hat g|$ is an indicator function and $|B|<<p^d$. We conclude the paper with an example and open questions.\end{abstract} 

%\tableofcontents

\section{Introduction}

\vskip.125in

In the classical case $\Bbb R^d$, it is possible to decompose a square integrable function in the Euclidean setting into the infinite sum of translations and modulations of a given function. The function is called {\it window} and the family of its translations and modulations is called {\it Gabor system}. The Gabor system is  a Gabor orthonormal basis if the system is an orthonormal basis. Gabor bases are strong tools in time-frequency analysis of signals (functions).  In this paper, we introduce the notion of Gabor orthonormal bases for the finite dimensional   vector spaces over finite fields and study their primary properties when the field is a prime filed. 

\vskip.125in

Let $A, B \subset {\Bbb Z}_p^d$, where ${\Bbb Z}_p$  is the cyclic group of size $p$ and ${\Bbb Z}^d_p$ is the $d$-dimensional vector space over 
${\Bbb Z}_p$. The purpose of this paper is to investigate {\it Gabor orthonormal bases}, also known as {\it Weyl-Heisenberg orthonormal bases} for $L^2({\Bbb Z}_p^d)$. These are the bases of the  form 
\begin{align}\label{Gabor System} 
\mathcal G(g, A, B):={\{g(x-a) \chi(x \cdot b)\}}_{a \in A; b \in B} ,
\end{align}
 with {\it window function}  $g: {\Bbb Z}_p^d \to {\mathbb C}$, $\|g\|_2=1$  (with respect to the counting measure) and $\chi$ is  any non-trivial character of the additive group  ${\Bbb Z}_p$.  The orthogonality is given by 
$$ \sum_{x \in {\Bbb Z}_p^d} g(x-a) \overline{g(x-a')} \chi(x \cdot (b-b'))=0 \ \text{if} \ (a,b) \not=(a',b').$$ 
 
   For convenience we work with $\chi(t)=e^{\frac{2 \pi i t}{p}}$, for $p$ prime, but any non-trivial character will do. The sets  $A$  and $B$ are called translation and modulation sets, respectively. 

Our investigation is partially motivated by the authors' previous work (\cite{IMP15}) on the Fuglede conjecture in ${\Bbb Z}_p^2$, $p$ prime. 

\begin{definition} We say that $E \subseteq {\Bbb Z}_p^d$ is spectral if there exists  $B\subseteq \Bbb Z_p^d$ such that  $L^2(E)$ possesses an orthogonal basis of exponentials ${\{\chi(x \cdot b) \}}_{b \in B}$. In this case we say that $(E,B)$ is a {\it spectral pair}.  (Note that this is a symmetric relation.)
\end{definition} 

\begin{definition} We say that $E \subseteq {\Bbb Z}_p^d$ tiles by translation if there exists $A \subseteq {\Bbb Z}_p^d$ such that 
$$ \sum_{a \in A} 1_E(x-a)=1 \ \text{for all} \ x \in {\Bbb Z}_p^d.$$ In this case we say that $(E,A)$ is a {\it tiling pair}. (Again, we note that the tiling pair property is also a symmetric relation.) The set $E$ is packing with $A$ if 
$$ \sum_{a \in A} 1_E(x-a)\leq 1 \ \text{for all} \ x \in {\Bbb Z}_p^d.$$
\end{definition} 

In \cite{IMP15} we proved that The Fuglede Conjecture holds for $\Bbb Z_p^2$, $p$ prime. Indeed:

\begin{theorem}\label{hottheorem} Let $E \subseteq {\Bbb Z}_p^2$, $p$ prime. Then $L^2(E)$ has an orthogonal basis of characters if and only if $E$ tiles 
${\Bbb Z}_p^2$ by translation. \end{theorem}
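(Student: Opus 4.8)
The plan is to convert both properties into statements about the vanishing of the Fourier transform $\widehat{1_E}(\xi)=\sum_{x\in E}\chi(x\cdot\xi)$ and then to pin down the possible values of $|E|$. First I would record the Fourier form of each notion. Since $\dim L^2(E)=|E|$, an orthogonal family of $|E|$ characters in $L^2(E)$ is automatically a basis, so $(E,B)$ is a spectral pair exactly when $|B|=|E|$ and $\widehat{1_E}(b-b')=0$ for all distinct $b,b'\in B$. On the other hand, counting multiplicities shows that $(E,A)$ is a tiling pair exactly when $|E|\,|A|=p^2$ and $\widehat{1_E}(\xi)\widehat{1_A}(\xi)=0$ for every $\xi\neq 0$. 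In particular the constraint $|E|\in\{1,p,p^2\}$ is automatic on the tiling side, while on the spectral side it is exactly the point that must be proved.

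The basic local input is the structure of vanishing sums of $p$-th roots of unity: as $p$ is prime, the only ${\Bbb Z}$-linear relation among $1,\chi(1),\dots,\chi(p-1)$ is $\sum_{j}\chi(j)=0$. Grouping $E$ according to the value of $x\cdot\xi$, this gives, for $\xi\neq 0$, that $\widehat{1_E}(\xi)=0$ if and only if $E$ meets each of the $p$ lines $\{x:x\cdot\xi=c\}$ in exactly $|E|/p$ points. Two consequences follow: $\widehat{1_E}(\xi)=0$ for some $\xi\neq 0$ forces $p\mid|E|$, and the vanishing of $\widehat{1_E}$ at $\xi$ depends only on the direction (the line through the origin) of $\xi$. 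Hence the zero set $Z(E)=\{\xi\neq 0:\widehat{1_E}(\xi)=0\}$ is a union of full punctured lines through the origin; I will call the corresponding directions the null directions $D_E$ of $E$. The spectral condition then reads: every direction determined by a difference $b-b'$ with $b\neq b'\in B$ lies in $D_E$.

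The heart of the argument, and the step I expect to be the main obstacle, is the size dichotomy for spectral sets: if $E$ is spectral then $|E|\in\{1,p,p^2\}$. Suppose $E$ is spectral with spectrum $B$, $|B|=|E|=m$, and $1<m<p^2$. If $m>p$, I would show that $B$ determines all $p+1$ directions: if some direction $\eta_0$ were not determined, then no two points of $B$ could lie in a common coset of the line $\langle\eta_0\rangle$, forcing $|B|\le p$, a contradiction. Therefore $D_E$ contains every direction, so $\widehat{1_E}$ vanishes off the origin, $1_E$ is constant, and $m\in\{0,p^2\}$, which is impossible. If instead $1<m<p$, then $B-B\neq\{0\}$ makes $Z(E)$ nonempty, so $p\mid m$, again impossible. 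Hence $m\in\{1,p,p^2\}$.

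It remains to match the two properties, which is trivial when $|E|=1$ or $|E|=p^2$, so only $|E|=p$ needs care. Here equidistribution across the $p$ lines perpendicular to a null direction $\eta$ means exactly one point of $E$ on each such line, i.e. $E$ is a transversal (graph) in the direction $\eta$. I would then verify that the three statements ``$E$ is spectral'', ``$D_E\neq\emptyset$'', and ``$E$ tiles'' are equivalent: if $\eta\in D_E$ then $B=\langle\eta\rangle$ is a spectrum (its nonzero differences lie in $Z(E)$) and $A=\eta^{\perp}$ is a tiling complement (the translates $e+\eta^{\perp}$, $e\in E$, are the $p$ distinct parallel lines partitioning ${\Bbb Z}_p^2$), while conversely both spectrality and tiling force $D_E\neq\emptyset$ --- the latter because $\widehat{1_A}$ cannot vanish on all of ${\Bbb Z}_p^2\setminus\{0\}$ when $|A|=p$. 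Combining the three admissible sizes yields the claimed equivalence.
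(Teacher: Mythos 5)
Your proof is correct, but note that the paper itself gives no proof of this theorem: it is quoted as Theorem \ref{hottheorem} from the authors' earlier work \cite{IMP15}, so there is no internal argument to compare against. Your route --- using the minimality of the cyclotomic polynomial of degree $p-1$ to show that $\widehat{1_E}(\xi)=0$ forces equidistribution of $E$ on the $p$ lines $\{x:x\cdot\xi=c\}$, hence that the zero set is a union of punctured lines through the origin, then the size dichotomy $|E|\in\{1,p,p^2\}$ for spectral sets via the coset-counting bound $|B|\le p$ when a direction is missed, and finally the transversal structure in the case $|E|=p$ --- is essentially the argument of \cite{IMP15}, reconstructed correctly and in a somewhat streamlined form.
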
 

 The Fuglede Conjecture was originally stated in $\Bbb R^d$ (\cite{Fu74}). 
For dimensions $d=3$ and  higher the conclusion of Theorem \ref{hottheorem} is known to be false in $\Bbb R^d$ (\cite{T04,KM06}).  
In ${\Bbb Z}_p^d$ the conclusion of Theorem \ref{hottheorem} is known to be false in dimensions four and higher.  As of dimension $3$,  the implication tile $\rightarrow$ spectral always holds  over the finite fields of prime order. It is also well-known that the Fuglede conjecture holds in $\Bbb Z_2^3$ and $\Bbb Z_3^3$ (\cite{Iosevich and CO}). It is trivial that the Fuglede conjecture holds in $\Bbb Z_p$, $p$ prime. 

 \medskip

 Our main results follow:

\begin{theorem}\label{basistilingtheorem} Suppose that $E\subseteq \Bbb Z_p^d$. Then 
  $ {\{|E|^{-1/2}1_E(x-a) \chi(x \cdot b) \}}_{a \in A; b \in B}$ is an orthonormal basis for $L^2(\Bbb Z_p^d)$ if and only if $(E, B)$ is a spectral pair and $(E, A)$ is a tiling pair.  
\end{theorem}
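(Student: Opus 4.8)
The plan is to reduce the orthonormal-basis condition to two independent requirements—one coming from the pairs with equal translation parameter and one from the pairs with distinct translation parameter—and then tie them together with a dimension count. Writing $g=|E|^{-1/2}1_E$, the inner product of two members of the system is
\[
\langle g_{a,b},g_{a',b'}\rangle=\frac{1}{|E|}\sum_{x\in\Bbb Z_p^d}1_E(x-a)\,1_E(x-a')\,\chi\big(x\cdot(b-b')\big).
\]
First I would treat the case $a=a'$. After the change of variables $y=x-a$ the phase factors out and this equals $|E|^{-1}\chi\big(a\cdot(b-b')\big)\sum_{y\in E}\chi\big(y\cdot(b-b')\big)$; hence the $a=a'$ relations say exactly that the exponentials $\{\chi(x\cdot b)\}_{b\in B}$ are orthogonal on $E$, with the prefactor $|E|^{-1/2}$ ensuring $\|g_{a,b}\|=1$. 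In particular orthogonality of these exponentials already forces $|B|\le\dim L^2(E)=|E|$.

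The crucial step is the case $a\neq a'$. Here the product $1_E(x-a)\,1_E(x-a')$ is the indicator of $(E+a)\cap(E+a')$, a \emph{nonnegative} function. Specializing the orthogonality relation to $b=b'$ makes the character trivial, so that $\sum_x 1_E(x-a)\,1_E(x-a')=0$; since the summand is nonnegative this forces $(E+a)\cap(E+a')=\emptyset$ for all $a\neq a'$ in $A$. Thus orthogonality of the Gabor system implies that the translates $\{E+a\}_{a\in A}$ are pairwise disjoint, i.e. $E$ packs $\Bbb Z_p^d$ by $A$, whence $|A|\,|E|\le p^d$. Conversely, disjointness of the translates makes the $a\neq a'$ relations hold automatically for all $b,b'$, so this geometric condition is the exact counterpart of the $a\neq a'$ orthogonality.

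To finish the forward direction I would combine the counts. Since an orthonormal set of distinct vectors in the $p^d$-dimensional space $L^2(\Bbb Z_p^d)$ is a basis precisely when it has $p^d$ elements, being an orthonormal basis gives $|A|\,|B|=p^d$. Together with $|B|\le|E|$ and $|A|\,|E|\le p^d$ this yields $p^d=|A|\,|B|\le|A|\,|E|\le p^d$, forcing both inequalities to be equalities: $|B|=|E|$ and $|A|\,|E|=p^d$. The first upgrades the orthogonal exponentials to a full orthogonal basis of $L^2(E)$, so $(E,B)$ is a spectral pair; the second upgrades the packing to a tiling, so $(E,A)$ is a tiling pair.

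For the converse one simply reverses these implications: a spectral pair $(E,B)$ supplies the $a=a'$ orthogonality and $|B|=|E|$, and a tiling pair $(E,A)$ supplies disjointness of translates—hence the $a\neq a'$ orthogonality—together with $|A|\,|E|=p^d$. The resulting system is then orthonormal with $|A|\,|B|=(p^d/|E|)\,|E|=p^d$ elements, hence automatically complete. I expect the only genuinely nontrivial point to be the nonnegativity argument of the second paragraph: it is what converts the purely analytic orthogonality relations with $a\neq a'$ into the combinatorial packing statement, after which the dimension count does the rest.
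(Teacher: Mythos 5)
Your proposal is correct and follows essentially the same route as the paper: splitting the orthogonality relations into the $a=a'$ case (orthogonal exponentials on $E$, giving $|B|\le|E|$) and the $a\neq a'$, $b=b'$ case (nonnegativity forcing disjoint translates, giving $|A|\,|E|\le p^d$), then using $|A|\,|B|=p^d$ from the dimension count to force both equalities, which upgrades orthogonality to a spectral pair and packing to a tiling pair. The converse via disjointness, spectral orthogonality, and cardinality-based completeness is likewise the paper's argument.
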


Our next two results on the   characterization of  Gabor orthonormal bases in $\Bbb Z_p^d$ are  motivated by  Liu-Wang's conjecture for the  Gabor orthonormal bases in $\Bbb R^d$ (\cite{Liu-Wang}).

\begin{theorem}\label{Liu-Wang_conjecture} Let $g\in L^2(\Bbb Z_p^d)$ and let  $A\subseteq \Bbb Z_p^d$ and $B\subseteq \Bbb Z_p^d$. Let   $E:= {\text supp}(g)$ and     $|E|=|B|$.  
Then 
 $\mathcal G(g, A, B)= {\{g(x-a) \chi(x \cdot b) \}}_{a \in A; b \in B}$ is  orthonormal  and complete in  $L^2(\Bbb Z_p^d)$ if and only if the following hold.

 \begin{itemize}
\item[(i)]    $|g| = |E|^{-1/2}1_E$. 

\vskip.125in

\item[(ii)] $(E, B)$ is a spectral pair. 
%The set ${\{ \chi(x \cdot b) \}}_{b \in B}$ is an orthogonal  basis for  $L^2(E)$. 

\vskip.125in 

\item[(iii)] $(E,A)$ is a  tiling pair.  

 \end{itemize}
 
Moreover,   if $d=2$ , $1<|A|, |B| <p^2$ and 
$ \mathcal G(g, A, B)$  is an orthonormal  basis for $L^2(\Bbb Z_p^2)$, then $E$ is graph of a function. 
\end{theorem}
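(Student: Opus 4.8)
\medskip

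The plan is to feed the orthonormal basis hypothesis into the equivalence already proved in the first part of the theorem, extract the cardinality of $E$ from the tiling and spectral relations, and then use the rigidity of vanishing sums of $p$-th roots of unity to produce a direction along which $E$ projects bijectively.

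First I would pin down $|E|$. By the first part of the theorem, since $\mathcal G(g,A,B)$ is an orthonormal basis the pairs $(E,A)$ and $(E,B)$ are respectively a tiling pair and a spectral pair. Tiling gives $|E|\,|A|=p^2$ and spectrality gives $|E|=|B|$, so the standing hypotheses $1<|A|,|B|<p^2$ force $1<|E|<p^2$ with $|E|\mid p^2$. As $p$ is prime, $p$ is the only divisor of $p^2$ strictly between $1$ and $p^2$, whence $|E|=|A|=|B|=p$.

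Next I would produce a nontrivial zero of the Fourier transform of $1_E$. Since $|B|=p\ge 2$ we may pick $b\neq b'$ in $B$; orthogonality of $\chi(x\cdot b)$ and $\chi(x\cdot b')$ in $L^2(E)$ is exactly the statement
$$\sum_{x\in E}\chi\bigl(x\cdot\xi_0\bigr)=0,\qquad \xi_0:=b-b'\neq 0 .$$
Writing $\omega=e^{2\pi i/p}$ and $n_c:=|\{x\in E: x\cdot\xi_0=c\}|$ for $c\in\Bbb Z_p$, this reads $\sum_{c\in\Bbb Z_p}n_c\,\omega^c=0$ with $\sum_c n_c=|E|=p$. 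Because the cyclotomic polynomial $1+t+\cdots+t^{p-1}$ is irreducible over $\Bbb Q$, the numbers $1,\omega,\dots,\omega^{p-2}$ are $\Bbb Q$-linearly independent and $\omega^{p-1}=-(1+\omega+\cdots+\omega^{p-2})$; substituting forces $n_0=n_1=\cdots=n_{p-1}$, and since they sum to $p$ each equals $1$. Thus $x\mapsto x\cdot\xi_0$ is a bijection from $E$ onto $\Bbb Z_p$.

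Finally I would read off the graph structure. The level sets $\{x: x\cdot\xi_0=c\}$, $c\in\Bbb Z_p$, form a pencil of $p$ parallel lines covering $\Bbb Z_p^2$, and the bijectivity just established says $E$ meets each of them in exactly one point. Choosing any basis $\{u,w\}$ of $\Bbb Z_p^2$ with $w\cdot\xi_0=0$ and $u\cdot\xi_0\neq 0$, a point $x=su+tw$ satisfies $x\cdot\xi_0=s\,(u\cdot\xi_0)$, so its first coordinate $s=(u\cdot\xi_0)^{-1}(x\cdot\xi_0)$ takes each value in $\Bbb Z_p$ exactly once as $x$ ranges over $E$; hence $E=\{\,su+f(s)w: s\in\Bbb Z_p\,\}$ is the graph of a function $f$. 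The crux of the argument, and the only place where primality of the field is essential, is the middle step: it is the irreducibility of $1+t+\cdots+t^{p-1}$ that upgrades a single vanishing Fourier coefficient into a genuinely bijective directional projection, and hence into the clean dichotomy that $E$ must be a graph.
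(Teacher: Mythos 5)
Your proposal does not prove the theorem: it proves only the final ``Moreover'' clause, while the main body of the statement --- the equivalence between $\mathcal G(g,A,B)$ being an orthonormal basis and conditions (i)--(iii) --- is simply assumed (``the equivalence already proved in the first part of the theorem''). That equivalence \emph{is} the theorem here, and it is where all the analytic work lives. In particular, nothing in your argument addresses why $|g|$ must be a constant multiple of $1_E$: the paper derives this from the orthogonality relation $\sum_{x}|g(x)|^2\chi(x\cdot(b-b'))=0$, which together with $|E|=|B|$ says that $B$ is a spectrum for the weight $w=|g|^2$, and then from a rigidity lemma (the paper's Lemma \ref{constant measure}): if $B$ is a spectrum for $L^2(w)$ with $\sum w=1$, then summing the identity $\sum_{b\in B}|\widehat w(x-b)|^2=p^{-2d}$ over $x$, applying Plancherel to get $\sum_m w(m)^2=|B|^{-1}$, and invoking Cauchy--Schwarz with equality forces $w$ to be constant on its support. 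You would also need the tiling direction (packing from orthogonality at $b=b'$, plus the counting identity $|A||B|=p^d$) and the converse. None of this appears in your write-up, so as a proof of the stated theorem it has a fundamental gap.

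The part you do prove --- the graph structure --- is correct and is essentially the argument the paper leaves implicit (it follows the structure theory of \cite{IMP15}, which the paper cites rather than reproves): the divisibility count pinning $|E|=p$, the vanishing sum $\sum_{x\in E}\chi(x\cdot\xi_0)=0$ for $\xi_0=b-b'$, and the use of the irreducibility of $1+t+\cdots+t^{p-1}$ to force the fibers of $x\mapsto x\cdot\xi_0$ to be singletons are all sound. One caveat: your conclusion is that $E$ is a graph over the pencil of lines determined by $\xi_0$, i.e.\ a graph after an adapted linear change of coordinates, whereas the paper's definition of graph fixes the standard first coordinate ($E=\{(x,u(x))\}$). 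Since a vertical line $\{0\}\times\Bbb Z_p$ satisfies all the hypotheses yet is not a graph in that literal sense, the statement can only be true in the up-to-coordinate-change (or coordinate-swap) reading, and that is exactly what your argument delivers; you should flag this explicitly rather than silently conflate the two notions.
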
 

\medskip  

We say a set $E\subset \Bbb Z_p^2$ is {\it graph} of a function if there is a map 
$u:\Bbb Z_p\to \Bbb Z_p$ such that $E=\{(x,u(x)): \ u\in \Bbb Z_p\}$. 

\medskip

The following Theorem is similar to Theorem \ref{Liu-Wang_conjecture} but the size assumption on $B$ has been replaced by a positivity assumption.  

 \begin{theorem}\label{positive window} Let  $g\in L^2(\Bbb Z_p^d)$  and let  $A\subseteq \Bbb Z_p^d$ and $B\subseteq \Bbb Z_p^d$. Let   $E:= {\text supp}(g)$ and $g$ be positive on  $E$. 
Then 
 $\mathcal G(g, A, B)$ is an orthonormal  basis for $L^2(\Bbb Z_p^d)$ if and only if the following hold true:

 \begin{itemize}
\item[(i)]    $g= |E|^{-1/2}1_E$. 

\vskip.125in

\item[(ii)]   $(E, B)$ is a spectral pair. 
%The set ${\{ \chi(x \cdot b) \}}_{b \in B}$ is an orthogonal  basis for  $L^2(E)$. 

\vskip.125in 

\item[(iii)] $(E,A)$ is a  tiling pair.  
 \end{itemize}
\end{theorem}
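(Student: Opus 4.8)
The plan is to obtain the ``if'' direction immediately from Theorem~\ref{basistilingtheorem} and to concentrate the work on the ``only if'' direction, where positivity is the decisive hypothesis. For the ``if'' direction, if (i) holds then $g=|E|^{-1/2}1_E$ and $\mathcal G(g,A,B)$ is exactly the system appearing in Theorem~\ref{basistilingtheorem}; since (ii) and (iii) are assumed, that theorem declares it an orthonormal basis. So I would assume conversely that $\mathcal G(g,A,B)$ is an orthonormal basis with $g>0$ on $E$, and aim to produce (i), (ii), (iii).

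First I would extract two families of scalar identities from the orthonormal-basis hypothesis. Writing $g_{a,b}(x)=g(x-a)\chi(x\cdot b)$, orthogonality of $g_{a,b}$ and $g_{a',b}$ for a common $b$ and $a\neq a'$ gives, after the substitution $y=x-a$ and with $w=a'-a$,
\[
\sum_{y\in\Bbb Z_p^d} g(y)\,\overline{g(y-w)}=0,\qquad w\in (A-A)\setminus\{0\}.
\]
Secondly, because any orthonormal basis $\{e_j\}$ of $L^2(\Bbb Z_p^d)$ satisfies $\sum_j |e_j(x)|^2=1$ pointwise (expand $\delta_x=\sum_j\langle \delta_x,e_j\rangle e_j$ and evaluate at $x$), I obtain
\[
|B|\sum_{a\in A}|g(x-a)|^2=1,\qquad\text{equivalently}\qquad \sum_{a\in A} g(x-a)^2=\tfrac1{|B|}\ \ \text{for all }x.
\]

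The key step, where positivity does all the work, is the first identity: since $g\ge 0$, each summand $g(y)\,g(y-w)$ is nonnegative, so the sum can vanish only if every term vanishes. Hence for $w\in(A-A)\setminus\{0\}$ there is no $y$ with both $y,\,y-w\in E$, which is precisely $(A-A)\cap(E-E)=\{0\}$; equivalently the translates $\{E+a\}_{a\in A}$ are pairwise disjoint. Feeding this into the second identity, whose right-hand side is the positive constant $1/|B|$, every $x$ must lie in some $E+a$, so the disjoint translates in fact cover $\Bbb Z_p^d$. Thus $(E,A)$ is a tiling pair, which is (iii). At each $x$ exactly one translate then contributes, forcing $g(x-a)^2=1/|B|$ on that translate; letting $x$ range shows $g^2\equiv 1/|B|$ on $E$, and $\|g\|_2=1$ gives $|E|=|B|$. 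Using $g>0$, this yields $g=|B|^{-1/2}1_E=|E|^{-1/2}1_E$, which is (i).

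Finally, having identified $g=|E|^{-1/2}1_E$, the standing hypothesis that $\mathcal G(g,A,B)$ is an orthonormal basis becomes exactly the hypothesis of Theorem~\ref{basistilingtheorem}, which then delivers that $(E,B)$ is a spectral pair, namely (ii), and re-confirms (iii). I expect the only delicate point to be the positivity argument that converts the vanishing autocorrelation sum into genuine disjointness of the $A$-translates of $E$; the remainder is bookkeeping with the two orthonormal-basis identities. It is worth emphasizing that this is exactly where the positivity hypothesis is indispensable: without it the cancellation in $\sum_y g(y)\overline{g(y-w)}$ need not be termwise, the translates of $E$ may overlap, and the conclusion $|g|=|E|^{-1/2}1_E$ can fail.
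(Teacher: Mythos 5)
Your proof is correct, and the ``only if'' direction takes a genuinely different route from the paper's. You agree with the paper on the packing step: both arguments note that $\sum_x g(x-a)g(x-a')=0$ with $g\ge 0$ forces termwise vanishing, hence disjointness of the translates $\{E+a\}_{a\in A}$. After that you diverge. The paper establishes covering by testing the indicator of the uncovered set $W$ against the whole system and invoking completeness ($\langle 1_W, g(\cdot-a)\chi(\cdot\, b)\rangle=0$ for all $a,b$ forces $1_W=0$); it then deduces $|E|=|B|$ from $|E||A|=p^d=|A||B|$, argues that $B$ is a spectrum for the weight $g^2$, and appeals to its Key Lemma (Lemma \ref{constant measure}, itself resting on Lemma \ref{square sum} and Cauchy--Schwarz) to conclude $g$ is constant on $E$. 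You instead use the pointwise Parseval identity $\sum_j |e_j(x)|^2=1$ for an orthonormal basis, which for the Gabor system reads $|B|\sum_{a\in A} g(x-a)^2=1$; combined with the already-proved disjointness this delivers covering and the constancy $g^2\equiv 1/|B|$ on $E$ in a single stroke, and then $\|g\|_2=1$ gives $|E|=|B|$ and hence (i). Your derivation of (ii) from Theorem \ref{basistilingtheorem} once (i) is in hand is legitimate and mirrors the paper's logic, as is your ``if'' direction (the paper routes it through the converse of Theorem \ref{Liu-Wang_conjecture}, which is the same content). The trade-off: the paper's Key Lemma is a more general tool (it needs only a spectrum for a nonnegative weight, no tiling structure, and is reused in Theorem \ref{Liu-Wang_conjecture}), whereas your argument is more elementary and self-contained for this theorem --- it exploits positivity twice (termwise vanishing, and the strict positivity of the constant $1/|B|$) and avoids both the Fourier-analytic lemmas and the citation needed to upgrade $|B|$ orthogonal exponentials to a spectrum for $L^2(g^2)$.
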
 

The following result   proves  that  the conclusions $(i)-(iii)$  in Theorems  \ref{Liu-Wang_conjecture} and \ref{positive window} generally fail if we remove either  the  size  or the  positivity assumption.  (See also Example \ref{example_of_graph}.)

Here and throughout, given $g: {\Bbb Z}_p^d \to {\Bbb C}$, the Fourier transform $\hat g$ is defined by 
\begin{equation} \label{ftdef} \widehat{g}(m)=p^{-d} \sum_{x \in {\Bbb Z}_p^d} \chi(-x \cdot m) g(x),  \end{equation} 
and Plancherel identity is given by 
\begin{align}\label{Plancherel identity} 
\sum_{m\in \Bbb Z_p^d} |\widehat g(m)|^2 = p^{-d}\sum_{x\in \Bbb Z_p^d} |g(x)|^2 .
\end{align}

\begin{theorem}\label{counter example} There exists an orthonormal basis $\mathcal G(g, A, B)$ with a Gabor window $g$ in $L^2(\Bbb Z_p^d)$,  where $g$ has none of the following properties: 
\vskip.125in 

\begin{itemize} 
\item (1) $|g|$ is  a  constant multiple of an indicator function of a set,  
\item (2) $|\hat g|$ is a  constant multiple of an  indicator function of a set. 
\end{itemize} 
\end{theorem}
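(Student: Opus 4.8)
The plan is to exhibit an explicit window in dimension two and to lift it to higher dimensions at the very end. I work in $L^2(\mathbb{Z}_p^2)$ and take $A=\mathbb{Z}_p e_1$ (translations in the first coordinate) and $B=\mathbb{Z}_p e_2$ (modulations in the second), so that $|A|=|B|=p$ and $|A|\,|B|=p^2=\dim L^2(\mathbb{Z}_p^2)$. I will arrange that $E:=\mathrm{supp}(g)=\mathbb{Z}_p^2$ and that $g$ is genuinely complex-valued; then $|E|=p^2\neq p=|B|$ and $g$ is not positive, so neither Theorem \ref{Liu-Wang_conjecture} nor Theorem \ref{positive window} applies, and there is no a priori reason for $|g|$ or $|\hat g|$ to be an indicator. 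Since the system has $p^2$ elements inside a $p^2$-dimensional space, completeness is automatic once orthonormality is established, so it suffices to make the Gram matrix the identity.

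Next I would reduce orthonormality to a \emph{fiberwise} flat-spectrum condition. Writing $g_t(x_1):=g(x_1,t)$ and applying Parseval in the variable $x_2$, the inner product of two system elements indexed by $(a_1,b_2)$ and $(a_1',b_2')$ equals $p^{-1}$ times the autocorrelation, at the shift $(a_1-a_1',\,b_2-b_2')$, of the partial Fourier transform $G(x_1,\xi_2):=\sum_{x_2}g(x_1,x_2)\chi(-x_2\xi_2)$. Hence the system is orthonormal precisely when the autocorrelation of $G$ is a point mass, that is, when $|\hat G|$ is constant; and since $|\hat G(m_1,m_2)|=p\,|\widehat{g_{-m_2}}(m_1)|$, this is equivalent to requiring that every fiber $g(\cdot,t)$ have constant-modulus discrete Fourier transform, with one and the same constant for all $t$. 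The normalization $\|g\|_2=1$ then forces the diagonal Gram entries to equal $1$.

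With this criterion in hand I would fix a function $f\colon\mathbb{Z}_p\to\mathbb{C}$ with $|\hat f|$ constant, $f$ nowhere zero, and $|f|$ nonconstant, and set $g(x_1,x_2)=c\,f(x_1)\chi(x_1x_2)$ with $c$ chosen so that $\|g\|_2=1$. Each fiber $g(\cdot,t)=c\,f\cdot\chi(t\,\cdot)$ is a modulation of $f$ and therefore shares the flat spectrum of $f$, so the criterion gives at once that $\mathcal G(g,A,B)$ is an orthonormal basis. A short computation then yields $|g(x_1,x_2)|=c\,|f(x_1)|$ and $|\hat g(m_1,m_2)|=c\,p^{-1}|f(m_2)|$; both are everywhere positive and nonconstant (note that these involve $f$ itself, not $\hat f$), so neither $|g|$ nor $|\hat g|$ is a constant multiple of an indicator, which is exactly (1) and (2).

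The crux is the existence of such an $f$, and this is the step I expect to be the main obstacle. Functions with constant-modulus transform are exactly those of the form $f=c\,\mathcal F^{-1}u$ with $u$ unimodular, a $p$-parameter family; within it the functions whose modulus is \emph{also} constant (the chirps and Gauss-sum windows) and those having a zero form a lower-dimensional locus, so a generic $u$ produces an $f$ that is nowhere zero with nonconstant modulus. For concreteness one can record an explicit choice, e.g. for $p=3$ the inverse transform of $(1,1,-1)$, whose modulus is proportional to $(1,2,2)$. Finally, to obtain the statement for every $d\ge 2$ I would tensor $g$ with a fixed full-support Gabor orthonormal window on $\mathbb{Z}_p^{d-2}$: the tensor product remains a Gabor orthonormal basis, $|g|$ and $|\hat g|$ inherit their nonconstant, non-indicator behavior from the two-dimensional factor, and the modulation set still satisfies $|B|=p\ll p^d$.
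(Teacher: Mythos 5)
Your construction is correct, but it is genuinely different from the paper's. The paper uses the same $A=\mathbb{Z}_p e_1$, $B=\mathbb{Z}_p e_2$, but takes a pure tensor window $g(x_1,x_2)=f(x_1)h(x_2)$, where $f$ is an explicit Gauss-sum window with $|\hat f|\equiv 1$ and values $1,\,1\pm\sqrt{p}$ (a second proof uses $\hat f=(-1,1,\dots,1)$, giving $f=p\,1_{\{0\}}-2$), and $h$ has constant modulus; orthogonality is verified directly by factoring the inner product and applying Plancherel in the first variable, and completeness by counting. Your chirp twist $g(x_1,x_2)=c\,f(x_1)\chi(x_1x_2)$ buys something real: since $\hat g(m_1,m_2)=c\,p^{-1}f(m_2)\chi(-m_1m_2)$, both $|g|$ and $|\hat g|$ are copies of the single nowhere-vanishing, nonconstant $|f|$, so (1) and (2) fail simultaneously from one hypothesis on $f$. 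In the paper's tensor version $|\hat g(m_1,m_2)|=|\hat h(m_2)|$, so ruling out (2) tacitly requires $|\hat h|$ not to be a constant multiple of an indicator --- for instance $h$ a character or a chirp would break the claim ``neither $|g|$ nor $|\hat g|$ is constant'' --- a condition the paper never makes explicit; your construction sidesteps this gap entirely. Your fiberwise flat-spectrum criterion, derived via the partial Fourier transform and autocorrelation, is exactly conditions (a) and (b) of Theorem \ref{Characterization of Gabor window for lattices} specialized to $d=2$, $k=1$, so you independently re-prove a tool the paper states as a separate theorem rather than checking orthogonality by hand.

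Two soft spots, neither fatal. First, your existence argument for $f$ (generic unimodular $u$ with $\mathcal{F}^{-1}u$ nowhere zero and of nonconstant modulus) is only a dimension-count sketch with an explicit witness at $p=3$; to make it uniform in $p$ you can simply take the paper's Gauss-sum $f$, or $f=p\,1_{\{0\}}-2$, which is nowhere zero with $|f|$ taking the two values $p-2$ and $2$ for every prime $p\ge 3$ (your phase example $u=(1,e^{i\pi/3})$ handles $p=2$, a case the paper's two constructions actually miss). Second, in the lift to $d>2$, to keep $|B|=p$ as you assert you must pair the extra factor on $\mathbb{Z}_p^{d-2}$ with $A'=\mathbb{Z}_p^{d-2}$, $B'=\{0\}$, and a full-support window with constant $|\hat h|$ as in Lemma \ref{basiccount}(iii); tensoring with a window whose modulation set is all of $\mathbb{Z}_p^{d-2}$ would inflate $|B|$ to $p^{d-1}$ (which is what the paper's higher-dimensional construction does, and is still $\ll p^d$, but contradicts your stated $|B|=p$).
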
 

\vskip.125in 

One of the themes of this paper is characterization of window functions, which is one of the subtle problems in the study of  Gabor bases. There are only few instances where the characterization can be achieved. The following example is one of those.  

\medskip 
 
Assume that $A$ is a subspace of $\Bbb Z_p^d$. Then for some $1\leq k< d$, $A$ is isomorphic with $\Bbb Z_p^k\times \{{\bf 0}_{d-k}\}$, where ${\bf 0}_{d-k}$ is the vector zero  in $\Bbb Z_p^{d-k}$. Let $B$ be the orthogonal complement of $A$. Then $B$ is isomorphic to $\{{\bf 0}_{k}\} \times \Bbb Z_p^{d-k}$. For any $x\in \Bbb Z_p^d$, we denote $x=(x_1,x_2)\in \Bbb Z_p^{k} \times \Bbb Z_p^{d-k} $ where $x_1$ and $x_2$ are  partitions of $x$ in $\Bbb Z_p^d$.   % and for any $x\in \Bbb Z_p^d$ we have $x=x_1+ x_2$ where $x_1\in A$ and $x_2\in B$. 
We  have the following result. 
 
\begin{theorem}\label{Characterization of Gabor window for lattices}
 Let $g\in L^2(\Bbb Z_p^d)$ and $A$ and $B$ be as above. Then the Gabor family  $\mathcal G(g, A,B)$ is an orthonormal  basis for $L^2(\Bbb Z_p^d)$ if and only if the following conditions hold true for any $1\leq k\leq d$: 

\begin{itemize} 
\item[(a)] $\left|\sum_{x_1\in \Bbb Z_p^k} g(x_1,x_2) \chi(-x_1\cdot m)\right| = \text{constant} \quad \forall\  x_2\in \Bbb Z_p^{d-k}$
\item[(b)] $\sum_{x_1\in \Bbb Z_p^k}  |g|^2(x_1,x_2)   = \text{constant}.$
\end{itemize} 
\end{theorem}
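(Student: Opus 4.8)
The plan is to reduce the assertion to a pure orthonormality statement and then decouple the two factors of the group. Since $A \cong \Bbb Z_p^k \times \{{\bf 0}_{d-k}\}$ and $B \cong \{{\bf 0}_k\}\times \Bbb Z_p^{d-k}$, we have $|A|\,|B| = p^k\cdot p^{d-k}=p^d = \dim L^2(\Bbb Z_p^d)$, so $\mathcal G(g,A,B)$ is an orthonormal basis if and only if it is an orthonormal system; moreover translation and modulation are unitary, so each element has norm $\|g\|_2=1$ automatically. Writing $a=(a_1,{\bf 0})$ and $b=({\bf 0},b_2)$, each element acts only on the respective coordinate, so I will abbreviate $g_{a_1,b_2}(x):=g(x_1-a_1,x_2)\chi(x_2\cdot b_2)$. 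First I would compute the inner product of two such vectors and, after the substitution $y_1=x_1-a_1$, show it depends only on the differences $c_1=a_1-a_1'$ and $c_2=b_2-b_2'$, namely
$$\langle g_{a_1,b_2}, g_{a_1',b_2'}\rangle = \sum_{x_2\in\Bbb Z_p^{d-k}} \chi(x_2\cdot c_2)\, R_{x_2}(c_1), \qquad R_{x_2}(c_1):=\sum_{x_1\in\Bbb Z_p^k} g(x_1,x_2)\,\overline{g(x_1+c_1,x_2)}.$$
As $(a_1,b_2)$ and $(a_1',b_2')$ range over $A\times B$, the pair $(c_1,c_2)$ ranges over all of $\Bbb Z_p^k\times\Bbb Z_p^{d-k}$ (here $k=\dim A$ is fixed), so orthonormality is equivalent to the single family of identities $F(c_1,c_2):=\sum_{x_2}\chi(x_2\cdot c_2)\,R_{x_2}(c_1)=\delta_{c_1,0}\,\delta_{c_2,0}$.

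Next I would split on whether $c_1=0$. The point is that, for each fixed $c_1$, $F(c_1,\cdot)$ is the Fourier transform in the $x_2$-variable of the function $x_2\mapsto R_{x_2}(c_1)$, and this transform is injective, so the two cases separate cleanly. For $c_1\neq 0$ the requirement $F(c_1,c_2)=0$ for every $c_2$ forces $R_{x_2}(c_1)=0$ for all $x_2$. For $c_1=0$, writing $P(x_2):=R_{x_2}(0)=\sum_{x_1}|g(x_1,x_2)|^2$, the requirement $F(0,c_2)=\delta_{c_2,0}$ says the $x_2$-transform of $P$ is supported only at the origin, i.e. $P$ is constant in $x_2$, with the value at $c_2=0$ giving the normalization $\sum_{x_2}P(x_2)=\|g\|_2^2=1$. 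This is precisely condition (b).

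Finally I would convert the $c_1\neq 0$ conditions into condition (a) via the discrete Wiener--Khinchin identity. Writing the partial Fourier transform $\hat h_{x_2}(m)=\sum_{x_1} g(x_1,x_2)\chi(-x_1\cdot m)$, a direct expansion gives $|\hat h_{x_2}(m)|^2=\sum_{c_1}\overline{R_{x_2}(c_1)}\,\chi(-c_1\cdot m)$, so that $R_{x_2}(c_1)$ vanishes for every $c_1\neq 0$ exactly when $|\hat h_{x_2}(m)|$ is independent of $m$ --- which is condition (a). This is the step I expect to carry the real content; everything else is the bookkeeping of two commuting Fourier transforms, one in $x_1$ producing (a) and one in $x_2$ producing (b). To confirm that the constant in (a) may be taken independent of $x_2$ as well, I would invoke Parseval on $\Bbb Z_p^k$: $\sum_m|\hat h_{x_2}(m)|^2=p^k P(x_2)$, so if $|\hat h_{x_2}(m)|\equiv K(x_2)$ then $K(x_2)^2=P(x_2)$, and condition (b) makes $P$, hence $K$, a single universal constant. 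Assembling the two cases shows that orthonormality holds if and only if (a) and (b) hold, which together with the dimension count completes the proof.
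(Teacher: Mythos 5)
Your proposal is correct and follows essentially the same route as the paper: both decouple the inner products via the partial Fourier transform in the $x_2$-variable (the paper's ``uniqueness of the Fourier transform'' step is your injectivity of $F(c_1,\cdot)$) and then apply Parseval/Wiener--Khinchin in $x_1$ to convert the vanishing autocorrelations $R_{x_2}(c_1)=0$ for $c_1\neq 0$ into constancy of $m\mapsto|\hat g_{x_2}(m)|$, with completeness obtained from the cardinality count $|A||B|=p^d$ in both arguments. The only differences are cosmetic: you treat general $k$ and $d$ directly (the paper writes out $d=2$, $k=1$ and declares the rest similar), you package the two implications as a single equivalence in the difference variables $(c_1,c_2)$, and you add the nice observation that Parseval together with (b) makes the constant in (a) uniform in $x_2$, whereas the paper explicitly allows it to depend on $x_2$.
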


\section{Preliminaries and basic properties of Gabor orthonormal  bases in ${\Bbb Z}_p^d$}

In this section we collect the basic properties of   Gabor orthonormal bases in ${\Bbb Z}_p^d$. 

\begin{lemma}\label{basiccount}  The following hold true. 
\begin{itemize} 
\item[(i)] If the Gabor family (\ref{Gabor System}) is an orthonormal basis, then 
 $|A||B| = p^d$.
 \item[(ii)]  If $|A|=1$ and $B={\Bbb Z}_p^d$, then for any function    $g$   such that $|g(x)|=p^{-d/2}$,     (\ref{Gabor System}) is an orthonormal basis. 
 \item[(iii)] If  $|B|=1$ and $A={\Bbb Z}_p^d$, then   for any   function  $g$  such that $|\widehat{g}(m)|=p^{-d/2}$,   (\ref{Gabor System}) is an orthonormal basis. 
 \end{itemize} 
  \end{lemma}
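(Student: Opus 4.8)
The plan is to treat the three items separately: a dimension count for (i), a direct computation of inner products on the group side for (ii), and the same computation transferred to the Fourier side for (iii). For (i) I would recall that $L^2(\mathbb{Z}_p^d)$ has dimension $p^d$; since orthonormal vectors are distinct and linearly independent, the indexing $(a,b)\mapsto g(x-a)\chi(x\cdot b)$ is injective and the family $\mathcal G(g,A,B)$ has exactly $|A|\,|B|$ elements. An orthonormal \emph{basis} must have exactly $\dim L^2(\mathbb{Z}_p^d)=p^d$ of them, whence $|A|\,|B|=p^d$.

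For (ii), write $A=\{a_0\}$ and set $h(x)=g(x-a_0)$, so $|h(x)|\equiv p^{-d/2}$ and the family becomes $\{h(x)\chi(x\cdot b)\}_{b\in\mathbb{Z}_p^d}$. The inner product of the $b$-th and $b'$-th vectors is
\[
\sum_{x}|h(x)|^2\,\chi\big(x\cdot(b-b')\big)=p^{-d}\sum_{x}\chi\big(x\cdot(b-b')\big),
\]
and orthogonality of the additive characters of $\mathbb{Z}_p^d$ evaluates this to $\delta_{b,b'}$. Hence the family is orthonormal, and since it has $|B|=p^d=\dim L^2(\mathbb{Z}_p^d)$ members it is automatically complete, i.e.\ a basis.

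For (iii), write $B=\{b_0\}$; the unimodular factor $\chi(x\cdot b_0)$ is common to all elements and cancels, so the inner product of the vectors indexed by $a$ and $a'$ is the autocorrelation $R(a-a')$ with $R(c)=\sum_y g(y-c)\overline{g(y)}$. Substituting the Fourier inversion of $g$ and collapsing the resulting character sum (equivalently, applying the Plancherel identity \eqref{Plancherel identity}) gives
\[
R(c)=p^{d}\sum_{m\in\mathbb{Z}_p^d}|\widehat g(m)|^2\,\chi(-c\cdot m).
\]
Thus $R$ is a multiple of the inverse transform of $|\widehat g|^2$, so $R(c)=0$ for every $c\neq0$ precisely when $|\widehat g(m)|^2$ is constant in $m$, while $R(0)=p^d\sum_m|\widehat g(m)|^2=\|g\|_2^2=1$ automatically by \eqref{Plancherel identity}; the resulting $p^d$ orthonormal vectors then form a basis by the count in (i).

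The one genuinely delicate point, and the step I would watch most carefully, is the normalization constant in (iii). Requiring $R(0)=1$ together with $\|g\|_2=1$ forces, via \eqref{Plancherel identity}, the constant value of $|\widehat g(m)|^2$ to be $p^{-2d}$, i.e.\ $|\widehat g(m)|=p^{-d}$; so to be consistent with the counting-measure normalization and the convention \eqref{ftdef} the flatness hypothesis in (iii) should read $|\widehat g(m)|=p^{-d}$ rather than $p^{-d/2}$ (one checks this directly on the standard basis $g=1_{\{0\}}$, whose transform is the constant $p^{-d}$). Conceptually this is the Fourier dual of (ii)---the transform \eqref{ftdef} interchanges translations and modulations---but because \eqref{ftdef} is an isometry only up to the factor $p^{-d/2}$, the flatness level moves from $p^{-d/2}$ on the group side to $p^{-d}$ on the frequency side.
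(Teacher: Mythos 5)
Your proposal is correct and is exactly the ``direct calculation and basic linear algebra'' that the paper's one-line proof leaves to the reader: a dimension count for (i), orthogonality of the additive characters for (ii), and the autocorrelation-on-the-Fourier-side computation for (iii). Your normalization remark is also well taken: with the paper's convention \eqref{ftdef} and the Plancherel identity \eqref{Plancherel identity}, a unit-norm window with flat Fourier modulus must satisfy $|\widehat g(m)|=p^{-d}$ (as the example $g=1_{\{0\}}$, whose transform is the constant $p^{-d}$, confirms), so the constant $p^{-d/2}$ in (iii) is consistent only with the unitary transform $p^{-d/2}\sum_x \chi(-x\cdot m)g(x)$, and as printed the family in (iii) is orthogonal with common norm $p^{d/2}$ rather than orthonormal. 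This is a (minor) inconsistency in the paper's statement, not a gap in your argument, and your proof with the corrected constant is complete.
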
 

\begin{proof} The proof is immediate  from a direct calculation and basic linear algebra. 
\end{proof} 

\medskip 
 
 {\bf Remark.} Notice Lemma \ref{basiccount} provides a complete characterization of Gabor orthonormal bases of form (\ref{Gabor System})  over the one dimensional space $\Bbb Z_p$.

\medskip 

 \begin{lemma}\label{Fourier transform of G-OGB} Suppose that $g\in L^2(\Bbb Z_p^d)$   and $A, B\subseteq \Bbb Z_p^d$. Then  $\mathcal G(g, A, B)$ is an orthonormal  basis for  $L^2(\Bbb Z_p^d)$ if and only if $\mathcal G(\widehat g, B, -A)$ is an orthonormal basis for $L^2(\Bbb Z_p^d)$. 
\end{lemma}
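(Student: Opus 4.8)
The plan is to exploit that the finite Fourier transform \eqref{ftdef} interchanges translation and modulation, so that applying it to $\mathcal G(g,A,B)$ reproduces $\mathcal G(\widehat g,B,-A)$ up to unimodular scalars and the Plancherel normalization. Throughout write $T_a g(x)=g(x-a)$ and $M_b g(x)=\chi(x\cdot b)g(x)$, so the generic member of $\mathcal G(g,A,B)$ is $M_bT_ag$.

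First I would record the two elementary intertwining identities, each obtained by a single substitution in \eqref{ftdef}: the change of variable $y=x-a$ gives $\widehat{T_a g}=M_{-a}\widehat g$, while collapsing the product of characters gives $\widehat{M_b g}=T_b\widehat g$. Composing them and pulling out the resulting constant phase yields the key relation
\[
\widehat{M_b T_a g}=\chi(a\cdot b)\,M_{-a}T_b\,\widehat g,
\]
which says that the Fourier image of the $(a,b)$ member of $\mathcal G(g,A,B)$ is a unimodular multiple of the $(b,-a)$ member of $\mathcal G(\widehat g,B,-A)$.

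Next I would invoke the Plancherel identity \eqref{Plancherel identity}, which in polarized form states $\langle\widehat f,\widehat h\rangle=p^{-d}\langle f,h\rangle$; equivalently, the operator $V$ given by $Vf=p^{d/2}\widehat f$ is unitary on $L^2(\Bbb Z_p^d)$. Combined with the key relation, $V$ carries each member of $\mathcal G(g,A,B)$ to a unimodular multiple of the corresponding member of $\mathcal G(p^{d/2}\widehat g,B,-A)$. A unitary map sends orthonormal bases to orthonormal bases and preserves completeness and cardinality, while multiplying individual vectors by unimodular scalars alters neither orthogonality nor norms; hence $\mathcal G(g,A,B)$ is an orthonormal basis of $L^2(\Bbb Z_p^d)$ if and only if $\mathcal G(\widehat g,B,-A)$ is, once the window is normalized by the Plancherel factor $p^{d/2}$ built into $V$.

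I expect no conceptual obstacle: the statement is a clean Fourier duality whose whole substance is the two intertwining identities. The only point requiring attention is bookkeeping, namely tracking the constant phase $\chi(a\cdot b)$ (harmless, since it is unimodular and affects each basis vector separately) and the Plancherel normalization $p^{d/2}$ (which must be carried so that the transformed window inherits the correct unit normalization on the frequency side).
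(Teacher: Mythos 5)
The paper states this lemma without any proof, and your argument is exactly the standard Fourier-duality proof it leaves implicit: the two intertwining identities $\widehat{T_ag}=M_{-a}\widehat g$ and $\widehat{M_bg}=T_b\widehat g$ (both correct under the paper's convention \eqref{ftdef}), the resulting relation $\widehat{M_bT_ag}=\chi(a\cdot b)\,M_{-a}T_b\widehat g$, and unitarity of $Vf=p^{d/2}\widehat f$, with the bijection $(a,b)\mapsto(b,-a)$ carrying $A\times B$ onto $B\times(-A)$. You also correctly catch the one real subtlety that the paper's statement glosses over: since \eqref{ftdef} is not unitary, $\|\widehat g\|_2=p^{-d/2}\|g\|_2$, so $\mathcal G(\widehat g,B,-A)$ is literally only an orthogonal basis of constant norm $p^{-d/2}$, becoming orthonormal after the $p^{d/2}$ rescaling you build into $V$ --- your bookkeeping of that factor and of the harmless phase $\chi(a\cdot b)$ makes the proof complete.
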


 In the following sections we shall use the above results to construct large families of Gabor orthonormal  bases and make connections between this problem and the Fuglede conjecture\rq{}s result  in ${\Bbb Z}_p^2$  in Theorem \ref{hottheorem}. 
 
\section{Proof of   Theorem \ref{basistilingtheorem}}

\begin{proof}[\bf Proof of Theorem \ref{basistilingtheorem}] Assume that $\mathcal G(|E|^{-1/2}1_E, A, B)$ is an orthonormal  basis for $L^2(\Bbb Z_p^d)$. Without loss of generality, we assume that  ${\bf 0} \in A$. Then 
\begin{equation} \label{basisprep} \sum_{x \in E} \chi(x \cdot (b-b'))=0 \ \text{if} \ b \not=b', \ b, b'\in B. \end{equation} 

Similarly, 
\begin{equation} \label{tilingprep} \sum_{x \in \Bbb Z_p^d} 1_E(x-a) 1_E(x-a')=0 \ \text{if} \ a \not=a', \ a, a'\in A \end{equation}

The equation (\ref{basisprep}) implies that $|E| \ge |B|$, while the equation (\ref{tilingprep}) implies that $E$ is a packing with $A$, thus  $|E| \leq \frac{p^d}{|A|}$. Combined with the fact that $|A||B|=p^d$ by Lemma \ref{basiccount}, we see that $|E|=|B|=\frac{p^d}{|A|}$. Invoking (\ref{basisprep}) once again, we see that $B$ must be a spectrum for $E$. Invoking (\ref{tilingprep}) we see that $A$ must be a tiling set for $E$. The conclusion of Theorem \ref{basistilingtheorem} for one direction follows.  

For the proof of the converse,  assume that $(E,A)$ is a tiling pair and $(E,B)$ is a spectral pair.  Then $|E||A|=p^d$ and $|E|=|B|$. Therefore, $|A||B|=p^d$. The orthogonality of the functions in $\{|E|^{-1/2}1_E(x-a) \chi(x\cdot b)\}_{a\in A, b\in B}$ holds by tiling property of $(E,A)$ and spectral property of $(E,B)$. 
This with the cardinality  condition $|A||B|=p^d$ completes the proof of the theorem. 
 \end{proof}

\section{Proof of   Theorem \ref{Liu-Wang_conjecture}}

    Let $w:\Bbb Z_p^d\to [0,\infty)$   and $B\subseteq \Bbb Z_p^d$.   We say  $B$ is a  spectrum for $L^2(w)$ if  $\{\chi(x\cdot b)\}_{b\in B}$ is an orthogonal basis for $L^2(w)$. That is 
  
\begin{itemize}
\item[(1)]
 the orthogonality holds: 
  $$\sum_{x\in \Bbb Z_p^d} \chi(x\cdot(b-b\rq{})) w(x) = 0 \quad \quad \forall \ b\neq b\rq{} ,\ b, b'\in B ,$$
\item[(2)]  
     $\{\chi(x\cdot b)\}_{b\in B}$ is complete in $L^2(w)$:  for any $f:\Bbb Z_p^d\to \Bbb C$ there exist  complex numbers $\{c_b\}_{b\in B}$ such that 
  
  $$ f(x) =\sum_{b\in B} c_b \chi(x\cdot b), \ \forall \ x\in \Bbb Z_p^d.$$
\end{itemize} 

  In this case, we say $B$ is a spectrum for $L^2(w)$. Notice if $w>0$ everywhere and $B$ is spectrum for $L^2(w)$,  then we must have $B=\Bbb  Z_p^d$.

 \begin{lemma}\label{square sum} Let $w: \Bbb Z_p^d\to [0,\infty)$. Suppose  that   $\sum_{x\in \Bbb Z_p^d} w(x)= 1$.  If   $B$ is a spectrum for $L^2(w)$, then  
  \begin{align}\label{square sum}
  \sum_{b\in B} |\widehat w(x-b)|^2 = p^{-2d} \quad \forall \ x\in \Bbb Z_p^d . 
  \end{align}
 \end{lemma}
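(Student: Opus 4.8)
The plan is to recognize that the normalization $\sum_{x} w(x)=1$ upgrades the merely orthogonal system $\{\chi(x\cdot b)\}_{b\in B}$ to a genuine \emph{orthonormal} basis of the Hilbert space $L^2(w)$, and then to read off the desired identity as a single instance of Parseval's equality. Equip $L^2(w)$ with the inner product $\langle f,h\rangle_w=\sum_{x}f(x)\overline{h(x)}\,w(x)$. First I would compute the $w$-norm of each basis vector: since $|\chi(x\cdot b)|=1$, one has $\|\chi(\cdot b)\|_w^2=\sum_x w(x)=1$, so together with the orthogonality and completeness built into the definition of ``$B$ is a spectrum for $L^2(w)$'', the family $\{\chi(\cdot b)\}_{b\in B}$ is orthonormal and complete, hence an orthonormal basis.

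Next, for a fixed but arbitrary $x\in\Bbb Z_p^d$, I would apply Parseval to the test vector $e_x:=\chi(\cdot x)\in L^2(w)$. On the one hand $\|e_x\|_w^2=\sum_y|\chi(y\cdot x)|^2 w(y)=\sum_y w(y)=1$. On the other hand, a short computation using the convention (\ref{ftdef}) gives $\langle e_x,\chi(\cdot b)\rangle_w=\sum_y\chi(y\cdot(x-b))\,w(y)=p^{d}\,\widehat w(b-x)$, and since $w$ is real we have $|\widehat w(b-x)|=|\widehat w(x-b)|$. Parseval's identity then reads $1=\sum_{b\in B}|\langle e_x,\chi(\cdot b)\rangle_w|^2=p^{2d}\sum_{b\in B}|\widehat w(x-b)|^2$, which after dividing by $p^{2d}$ is exactly the desired identity (\ref{square sum}); since $x$ was arbitrary, it holds for every $x$.

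The calculation is essentially routine once this viewpoint is adopted, so the only real subtlety is bookkeeping: one must confirm that the hypothesis $\sum_x w(x)=1$ is precisely what makes the basis orthonormal, and hence what produces the clean constant $p^{-2d}$ rather than something depending on $\|w\|_2$ or $|B|$; and one must track the Fourier convention and the sign of the argument of $\widehat w$ so that the conjugate in $\langle\cdot,\cdot\rangle_w$ sits in the correct slot. I expect the main obstacle, such as it is, to be the choice of test function $e_x=\chi(\cdot x)$: with this choice the inner products collapse directly to values of $\widehat w$, whereas any other choice obscures the link. Should the Parseval framing seem too slick, an equivalent route is to expand $\sum_{b}|\widehat w(x-b)|^2$ directly via (\ref{ftdef}), interchange the order of summation, and invoke the completeness of $\{\chi(\cdot b)\}$ in $L^2(w)$ to evaluate the resulting exponential sum; this reproduces the same identity at the cost of more index-chasing.
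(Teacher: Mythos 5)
Your proposal is correct and is essentially the paper's own proof: the paper likewise observes that $\sum_x w(x)=1$ upgrades $\{\chi(\cdot\, b)\}_{b\in B}$ to an orthonormal basis of $L^2(w)$, rewrites $\widehat w(x-b)$ (up to the factor $p^{-d}$ and a conjugation, harmless after taking moduli) as the inner product $\langle \chi_b,\chi_x\rangle_{L^2(w)}$, and applies Parseval to the test vector $\chi_x$ to get $\sum_{b\in B}|\langle\chi_b,\chi_x\rangle_{L^2(w)}|^2=\|\chi_x\|_{L^2(w)}^2=\sum_m w(m)=1$. The only difference is the direction of the bookkeeping --- the paper expands the left-hand sum via the definition (\ref{ftdef}) and then recognizes inner products, whereas you start from Parseval for $e_x=\chi(\cdot\, x)$ and recognize Fourier coefficients --- which is immaterial.
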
 
 \begin{proof}
 By the definition of the Fourier transform on $\Bbb Z_p^d$  we can write the following. 
 \begin{align}\label{FT} 
 \sum_{b\in B} |\hat w(x-b)|^2 &= \sum_{b\in B} \left|p^{-d}\sum_{m\in \Bbb Z_p^d}  w(m) \chi(-m\cdot (x-b))\right|^2 \\
 & =p^{-2d} \sum_{b\in B} \left| \sum_{m\in \Bbb Z_p^d}  \chi(-m\cdot x) \chi(m\cdot b) w(m)  \right|^2\\
 &=   p^{-2d} \sum_{b\in B} \left| \langle \chi_b, \chi_x\rangle_{L^2(w)} \right|^2  \ .
 \end{align} 
 Here, $\chi_b(t) := \chi(t\cdot b)= e^{2\pi i \frac{t\cdot b}{p}}$. 
 
 By the assumption that $B$ is a spectrum for $L^2(w)$ and  $\sum_{x\in \Bbb Z_p^d}w(x)=1$, the set  $\{\chi_b\}_{b\in B}$ is an orthonormal basis for $L^2(w)$. Therefore,  we continue as follows: 
 
 \begin{align}\label{FT} 
 \sum_{b\in B} |\hat w(x-b)|^2 =   p^{-2d}  \| \chi_x\|_{L^2(w)}^2  
   = p^{-2d} \sum_{m\in \Bbb Z_p^d} w(m) = p^{-2d}   ~. 
 \end{align} 
 This completes the proof of  the lemma. 
 \end{proof}

  \begin{lemma}[Key lemma]\label{constant measure}
Let  $w:\Bbb Z_p^d\to [0,\infty)$  with $\text{supp}(w)=E$. Assume that   
 $B$ is a spectrum for $L^2(w)$.  Then  $w=c1_E$ for some  constant $c>0$. 
 \end{lemma}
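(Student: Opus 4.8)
The plan is to reduce the whole statement to the equality case of the Cauchy--Schwarz inequality, feeding off the normalized identity already proved in Lemma~\ref{square sum}. First I would normalize: multiplying $w$ by a positive constant changes neither the support $E$ nor the property that $B$ is a spectrum for $L^2(w)$, so I may assume $\sum_{x\in\Bbb Z_p^d} w(x)=1$. With this normalization Lemma~\ref{square sum} applies verbatim and yields $\sum_{b\in B}|\widehat w(x-b)|^2=p^{-2d}$ for every $x\in\Bbb Z_p^d$. Once the constant $c$ is found for the normalized function, undoing the scaling returns the claim $w=c\,1_E$ for the original $w$.

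Next I would extract two scalar identities and play them against each other. Since $\{\chi_b\}_{b\in B}$ is an orthogonal basis for $L^2(w)$ and $\dim L^2(w)=|E|$ (functions are distinguished only on the support of $w$), we must have $|B|=|E|$. Summing the identity of Lemma~\ref{square sum} over all $x\in\Bbb Z_p^d$, the left-hand side becomes, for each fixed $b$, the full sum $\sum_m|\widehat w(m)|^2$ after the substitution $x\mapsto x-b$, so it equals $|B|\sum_m|\widehat w(m)|^2$; the right-hand side equals $p^d\cdot p^{-2d}=p^{-d}$. Invoking Plancherel (\ref{Plancherel identity}), $\sum_m|\widehat w(m)|^2=p^{-d}\sum_x w(x)^2$, and after cancelling the common factor $p^{-d}$ I obtain $|E|\sum_{x\in E}w(x)^2=1$, while the normalization reads $\sum_{x\in E}w(x)=1$.

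Finally I would compare these via Cauchy--Schwarz: $\bigl(\sum_{x\in E}w(x)\bigr)^2\le |E|\sum_{x\in E}w(x)^2$, with equality exactly when $w$ is constant on $E$. Here both sides equal $1$, so equality is forced and $w$ is constant on its support, i.e. $w=c\,1_E$ with $c>0$. I do not expect a genuine obstacle here, since the hard analytic content is already packaged in Lemma~\ref{square sum}; the only points needing care are the summation-over-$x$ step, where one checks that translating the argument of $\widehat w$ leaves the total $\ell^2$ mass unchanged (a change of variables over the group), and the dimension count $|B|=|E|$, which rests on $L^2(w)$ having dimension equal to the size of the support of $w$. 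The real conceptual move is simply recognizing that the pointwise identity of Lemma~\ref{square sum}, after one integration, produces precisely the Cauchy--Schwarz equality that pins down the weight.
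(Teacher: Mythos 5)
Your proposal is correct and follows essentially the same route as the paper's own proof: normalize $\sum w=1$, sum the identity of Lemma~\ref{square sum} over $x\in\Bbb Z_p^d$, apply Plancherel to get $\sum_m w(m)^2=|B|^{-1}$, and conclude via the equality case of Cauchy--Schwarz. The only difference is cosmetic: you spell out the dimension count $|B|=\dim L^2(w)=|E|$, which the paper uses implicitly in the step $|E|^{1/2}|B|^{-1/2}=1$.
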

 
%  In a private communication, Kolountzakis provided us with  the proof of an analogy result  in  $\Bbb R^d$. The discrete version of the proof in  $\Bbb Z_p^d$  follows. 

   \begin{proof} Without loss of generality we assume that $\sum_{m\in E} w(m)=1$. Then by Lemma \ref{square sum}  we have 
 
 $$\sum_{b\in B} |\widehat w(x-b)|^2 = p^{-2d} \quad \forall \ x\in \Bbb Z_p^d .$$
 By  summing  both sides of the equality  over $x\in \Bbb Z_p^d$,  we deduce the following. 
  
 \begin{align*}
 p^{-d} & = \sum_{x\in \Bbb Z_p^d} \sum_{b\in B} |\hat w(x-b)|^2 \\
 &=   \sum_{b\in B} \sum_{x\in \Bbb Z_p^d} |\hat w(x)|^2  \\
 & =p^{-d} |B| \sum_{m\in \Bbb Z_p^d} w(m)^2 \quad \quad \text{(by Plancherel identity (\ref{Plancherel identity}))} .
 \end{align*}
 This implies that  $\sum_{m\in \Bbb Z_p^d} w(m)^2= |B|^{-1}$. 
 To prove  $w$ is a constant function, we shall continue as follows: 
 \begin{align*}
 1 &= \sum_{m\in E} w(m)\\
 & \leq |E|^{1/2} (\sum_{m\in \Bbb Z_p^d}  w(m)^2)^{1/2} \quad \quad \text{(by Cauchy-Schwartz inequality)}
 \\
 &=  |E|^{1/2} |B|^{-1/2} =1.
 \end{align*}
 
 The result yields and $w=c1_E$ for some $c>0$. 
 
 \end{proof}

 Now we are ready to state the proof of Theorem \ref{Liu-Wang_conjecture}.

 \begin{proof}[Proof of Theorem \ref{Liu-Wang_conjecture}] 
 
 Assume that  $\mathcal G(g, A, B)$ is a Gabor orthonormal basis for $L^2(\Bbb Z_p^d)$.  Then we have 
 
 $$\sum_{x\in \Bbb Z_p^d} |g(x)|^2 \chi((b-b\rq{})x) = 0 \  \ b\neq b\rq{}, b,b' \in B.$$
 
With the assumption that $|E|=|B|$, the preceding equation implies that $\{\chi_b\}_{b\in B}$ is an orthogonal basis for $L^2(w)$ where $w=|g|^2$. By Lemma \ref{constant measure} the function   $w$ must be constant on its support.   This forces that   $|g| = |E|^{-1/2}1_E$, where $E$ is the support of $g$. To prove $(E,A)$ is a tiling pair, note that the translations of $E$ by elements of $A$ cover the whole space. The disjointness of the translations is a direct result of the cardinality. Indeed, $|A||E|=|A| |B|= p^d$, and this implies that $\sum_{a\in A} 1_E(x-a)=1$ for all $x\in \Bbb Z_p^d$, thus the proof is completed. 
\medskip 

 To prove the converse, notice 
the assumption $(ii)$ implies $|E|=|B|$. By $(iii)$ and $(ii)$,  for any $(a,b)\neq (a\rq{},b\rq{})$    we get  

 $$\sum_{x\in \Bbb Z_p^d} g(x-a)\overline{g(x-a\rq{})}   \chi((b-b\rq{})x)= 0 .$$
 
Thus $\mathcal G(g, A, B)$ is an orthonormal system in $L^2(\Bbb Z_p^d)$. The completeness is deduced from  the cardinality, since by the tiling property $|A||E|=p^d$ and by the spectral property $|B|=|E|$. 
 \end{proof}

We conclude this section by an example of a Gabor window which is not an indicator function and its support $E$  has size larger than $|B|$.

\begin{example}\label{example_of_graph} Let $F:=\{(t,t^2): t\in \Bbb Z_p\}$. $F$ is graph of a function, thus it tiles by the subgroup $\{0\}\times \Bbb Z_p$, so it is  spectral (\cite{IMP15}).  Assume that $A$ is the tiling pair and $B$ is the spectrum for $F$ with $|A|=|B|=p$. Then 
$\mathcal G(|F|^{-1/2}1_F, A, B)$ is an orthonormal basis for $L^2(\Bbb Z_p^2)$.  Let $f=|F|^{-1/2}1_F$ and   let 
  $g= \widehat{f}$ be the Fourier transform of $f$. Then by Lemma \ref{Fourier transform of G-OGB}, $\mathcal G(g, B, -A)$ is an orthonormal basis for $L^2(\Bbb Z_p^2)$.  A direct calculation shows that  $|E|=p^2-p+1$ where  $E=\text{supp}(g)$. Indeed, $|E|>p$ and $g$ is not an indicator function. Moreover, the conclusions (ii) and (iii) of Theorem \ref{Liu-Wang_conjecture} also fail. 
\end{example} 

 In  Section \ref{counter-example}  we shall present an example of a Gabor window $g$ where neither $g$ nor $\hat g$ is an indicator function. 

\section{Proof of  Theorem \ref{positive window}} 

In this section we prove Theorem \ref{positive window}.

 \begin{proof}  We shall first prove (iii). 
 Assume that $\mathcal G(g, A, B)$ is a Gabor orthonormal  basis.  Then by the orthogonality, for any $a, a\rq{}\in A$, we have 
  $$\sum_{x\in \Bbb Z_p^d} g(x-a) g(x-a\rq{}) =\sum_{E+a \cap E+a\rq{}} g(x-a) g(x-a\rq{})  =  0.  $$ 
  Since $g$ is positive, then we must have $g(x-a)g(x-a\rq{}) = 0$ for all $x\in E+a \cap E+a\rq{}$. This only happens if $E+a \cap E+a\rq{} = \emptyset$ since the functions $g(x-a)$ and $g(x-a\rq{})$  have support in $E+a$ and $E+a\rq{}$, respectively.  This proves the packing property for  $E$. 
  To complete the proof of  (iii), assume that $W$ is a subset of $\Bbb Z_p^d$ such that $W\cap E+a=\emptyset$   for all $a\in A$. We show $W$ must be empty. To this end, note that $\langle 1_W, g(x-a) e_b\rangle = 0$ by the assumption on $W$. Thus,  by the completeness of the Gabor system, $1_W$ must be zero and $W$ must be empty. This completes the proof of (iii). 
  
  \vskip.125in
   
  Next we prove (i) and (ii), simultaneously. 
   Without loss of generality, we assume ${\bf 0}\in A$. Then  by  appealing to the orthogonality of the Gabor family once again, we have 
  $$\sum_{x\in \Bbb Z_p^d} g(x)^2 \chi(x(b-b\rq{})) = 0 \quad \forall b, b\rq{}\in B, b\neq b\rq{} $$

   By (iii), $E$ is a tiling set with respect to the $A$-translations. Thus $|E||A|= p^d$. On the other hand we know $|A||B|=p^d$. Thus, $|E|=|B|$.  This, along the   orthogonality of     $\{\chi(bx): b\in B\}$ in $L^2(g^2)$, implies that  $B$ is  a spectrum for the  measure $g^2$ (\cite{IMP15}). That is  $\{\chi(bx): b\in B\}$ is an orthogonal basis for $L^2(g^2)$.  By Lemma \ref{constant measure},    $g$ has to be constant on its support. This proves (i), thus (ii). 
   
 %  {\bf  Second prove  of (i) and (ii):} For any $t\in \Bbb Z_p^d$, we define $g_t(x) = g(x) \chi(tx)$. Since $\mathcal G(g, A, B)$ is orthonormal basis, then by the Parseval identity, for $f=g_t$ we have 
   
   %$$\sum_{a,b} |\langle f, g_{a,b} \rangle|^2 = \|f\|^2 .$$

   %Thus, by separating the summation over $a=0$ and $a\neq 0$, and considering that $E$ is a packing set (the translations of $E$ by elements of $A$ are disjoint), we arrive to the following equation: 
   
   %$$\sum_b |\sum_x g(x)^2 \chi(x(t-b)|^2 = \|f\|^2 $$ 
   
   %Notice for  the inner  summation   we have 
   
   %$$\sum_x g(x)^2 \chi(x(t-b) = p^d \widehat{(g^2)}(b-t) $$ 
   
   %Using this in above, we get 
   
     % $$\sum_b |\widehat{(g^2)}(b-t)|^2 =  c \|f\|^2=c\quad \forall t\in \Bbb Z_p^d $$ 
      %for some $c>0$. 
   
   %By the next lemma, the identity implies that $B$ is a spectrum for $L^2(g^2)$. By one of our previous result we then conclude that $g$ has to be constant on its support. This completes the proof of (i) and (ii). \\
   
   The converse of  the theorem is due Theorem \ref{Liu-Wang_conjecture}. 
      
  \end{proof}
  
\section{Proof of Theorem \ref{counter example}}\label{counter-example}

 In this section we provide two proofs for Theorem  \ref{counter example}. 

\begin{proof} We first prove the theorem in dimension $d=2$.  Indeed, we show that there is a function $g\in L^2(\Bbb Z_p^2)$ for which the   assumption of the theorem as well as   the properties (1) and (2)  hold true. 

\medskip 

{\it Proof 1}: 
Let $f\in L^2(\Bbb Z_p)$ such 

 % For this, 
% we prove that there exists a function $f\in L^2(\Bbb Z_p)$ such that $f$ is not positive nor  $|f|$ is an indicator function.  
\medskip

 \begin{align}\label{gaussian summation}
  \widehat{f}(m):=\begin{cases}
\frac{1}{\sqrt{p}} \sum_{t\in \Bbb Z_p} \chi(-mt^2) & m\neq 0\\
1 & \text{otherwise }
\end{cases}
\end{align}

Then  $|\hat f(m)|=1$ for all $m\in \Bbb Z_p$,   
%$$ |\hat f(m)| = \left| \frac{1}{\sqrt{p}} \sum_{t\in \Bbb Z_p}\chi(-mt^2) \right|=1\quad \quad \forall m\neq 0 .$$ 
%
(see e.g. \cite{Stein_Shakarchi_book}). 
By the inverse Fourier formula we have 
 \begin{align*} 
 f(x) &=1+\frac{1}{\sqrt{p}} \sum_{m \not=0} \sum_{t\in \Bbb Z_p} \chi(m(x-t^2))\\ 
 &=1-\sqrt{p}+\frac{1}{\sqrt{p}} \sum_{m\in \Bbb Z_p}\sum_{t\in \Bbb Z_p} \chi(m(x-t^2)) . 
 \end{align*}
Or,  
$$
f(x) = \begin{cases} 
1 & \ {\text {if}}\  \ x=0 \\
 1+\sqrt{p}  &\  {\text {if}} \ \  \ x\neq 0  \  \text{and} \ xRp  \\
1-\sqrt{p}  & \ {\text {if}} \ \  \ x\neq 0  \ \text{and} \  xNp .
\end{cases} 
$$
(Here, by $xRp$ we mean  $x$ is quadratic residue  modulo  $p$, and by  $xNp$ we mean  $x$ is qudratic nonresidue modulo  $p$.) 
Let $h\in L^2(\Bbb Z_p)$ such that for some constant $c>0$,  $|h(x)|= c$ for all $x\in \Bbb Z_p$.  
We define $g$ as a product of two functions  of single variable:  
%]
\begin{align}\label{product of two functions} 
g(x_1,x_2) := f(x_1)  h(x_2)\quad (x_1,x_2)\in \Bbb Z_p\times \Bbb Z_p.
\end{align}
We shall choose   $c>0$ such that $\|g\|_2=1$.   
% Then ${\text supp}(g)=\Bbb Z_p^2$. 
For the lattices 
  $A:=\{(a,0): a\in \Bbb Z_p\}$ and   $B:=\{(0,b), b\in \Bbb Z_p\}$ it can   easily be checked that   $\mathcal G(g, A, B)$ is an orthonormal set in $L^2(\Bbb Z_p^2)$. Indeed, let $(a,0)\in A$ and $(0,b_1), (0,b_2)\in B$. Then 
  \begin{align}\label{orthogonality} 
  \sum_{x_1,x_2} g(x_1-a, x_2) \overline{g(x_1,x_2)} \chi(x_2(b_1-b_2)) = c^2\left(\sum_{x_1} f(x_1-a) \overline{f(x_1)}\right)\sum_{x_2} \chi(x_2(b_1-b_2)). 
  \end{align} 
The second sum on the right equals to zero when $b_1\neq b_2$. When  $a\neq 0$,   then the sum inside the parenthesis is   zero by an application of the  Plancherel theorem and the fact  that $|\hat f(m)| = 1$ for all $m\in\Bbb Z_p$. This proves the orthogonality of the Gabor family $\mathcal G(g, A, B)$  in $L^2(\Bbb Z_p^2)$. The 
  completeness follows   by an cardinality argument.

     Notice,  in above  ${\text supp}(g)=\Bbb Z_p^2$ and 
  neither $|g|$ nor $|\hat g|$ is constant.    This completes the  proof for the existence of $g$  with desired properties in dimension $d=2$. For higher dimensions we shall continue as follows: 
  
  \medskip

  Define  $\tilde g(x_1, \cdots, x _d) := h_1(x_1)h_2(x_2) \cdots h_d(x_d)$,  where $h_1$ is the function $f$ in above, and for each $2\leq i\leq d$ we have  $|h_i(m)|= c_i$, $c_i>0$.  Take  $\tilde A:=\{(a,0,\cdots, 0): a\in \Bbb Z_p\}$ and $\tilde B:=\{(0,b_1, \cdots, b_{d-1}), b_i\in \Bbb Z_p\}$. The orthogonality of $\mathcal G(\tilde g, \tilde A, \tilde B)$ can be obtained in the same way as in (\ref{orthogonality}) and the following argument. The completeness holds by the cardinality. 
  
  \medskip 
  
  {\it Proof 2.} Here, we replace the Gaussian summation (\ref{gaussian summation}) above and function $f$ by the following function. Take $f\in L^2(\Bbb Z_p^2)$ such that $\hat f(m)=1$ for all $m\neq 0$ and $\hat f(0)=-1$. Then it is clear that $|\hat f(m)|=1$, thus the orthogonality of the Gabor set $\mathcal G(g, A, B)$ holds for same $A$ and $B$ as above. Then, by the inverse of Fourier transform,  we have $f(x)=p-2$ if $x\neq 0$ and $f(0)=-2$. Note that  neither $\hat f$ nor $f$ is constant multiple of a characteristic function, therefore the    function  $g$, constructed as in (\ref{product of two functions}), is a Gabor window function which satisfies the properties (1) and (2).  For the higher dimensions, we repeat the argument as in Proof 1. \end{proof} 
 
\section{Proof of Theorem \ref{Characterization of Gabor window for lattices}} 
 
 This section provides a proof for Theorem \ref{Characterization of Gabor window for lattices}. 

\begin{proof}  For the sake of simplicity, we  prove the theorem only for the case $d=2$. The proof for  higher dimensions is similar. 

$``\Longrightarrow"$ \ For the  proof of (a),  let $a\neq 0$. Then 
by  mutual  orthogonality of    $\mathcal G(g, A, B)$ in $L^2(\Bbb Z_p^2)$, for any $b\in \Bbb Z_p$ we  get
$$\sum_{x_2\in \Bbb Z_p}\left(\sum_{x_1\in \Bbb Z_p} g(x_1-a, x_2) \overline{g(x_1, x_2)}\right) \chi(x_2b)= 0  ~ .$$

  The uniqueness of the Fourier transform implies that  for all $a\neq 0$ we  have 

$$\sum_{x_1\in \Bbb Z_p} g(x_1-a, x_2) \overline{g(x_1, x_2)}  = 0  \quad  \forall x_2\in \Bbb Z_p .$$
If we let $g_y:\Bbb Z_p\to \Bbb C$ define $g_y(x):= g(x,y)$, then we can rewrite the preceding equation as 
$$\sum_{x_1\in \Bbb Z_p} g_{x_2}(x_1-a) \overline{g_{x_2}(x_1)}  = 0  \quad  \forall x_2\in \Bbb Z_p. \quad (\ast)$$

By applying the  Parseval identity to the equation  $(\ast)$ in variable $x_1$, for any $x_2\in \Bbb Z_p$   we obtain 

\begin{align}\label{constant fourier}
\sum_{m\in \Bbb Z_p} |\hat g_{x_2}(m)|^2 \chi(-am)  = 0 \quad \forall \ a\neq 0 .
\end{align}

The relation (\ref{constant fourier}) 
 forces $|\hat g_{x_2}|$ to be a  constant function. Or equivalently, 
 $$|\sum_{x_1} g(x_1,x_2) \chi(-x_1\cdot m)| = \text{constant} \quad \forall \ m \in \Bbb Z_p ,$$
 
hence  the proof of (a) is completed. (Notice this  constant  depends only on  $x_2$.) 
 
\vskip.125in 

To prove (b), let  $b\neq 0$. The orthogonality assumption implies that 
$$\sum_{(x_1,x_2)\in \Bbb Z_p^2} |g(x_1,x_2)|^2 \chi(-bx_2) = 0 .$$   
Thus for $G(x_2) : = \sum_{x_1} |g|^2(x_1,x_2)$, the existing equation means that  $\hat G(b)=0$ for all $b\neq 0$. This implies that 
  $G$ is constant,  indeed, $G(x)= \hat G(0)$, $x\in\Bbb Z_p$, and hence (b) follows. 

\vskip.125in

$``\Longleftarrow":$ \ To prove the converse, assume that the conditions (a) and (b) hold. By   (b)   we have 

 $$\sum_{x_1} g(x_1,x_2) \overline{ g(x_1,x_2)} =``\text{constant in} \  x_2".$$ 
 
Then for any $b\neq 0$ 
  $$\sum_{x_2} \sum_{x_1} g(x_1,x_2) \overline{ g(x_1,x_2)}\chi(bx_2) = \text{(constant)}    \sum_{x_2}  \chi(bx_2)   = 0 .$$
 
This is equivalent to saying that 
\begin{align}\label{eq:1}
\langle g, \chi_{(0,b)} g\rangle_{L^2(\Bbb Z_p^2)} = 0 \quad \forall b\neq 0 . 
\end{align}
Here,  $ \chi_{(m,n)} f(x,y) := f(x,y) \chi(mx+ny)$. 
\vskip.125in

On the other hand, (a) holds. Then for any 
   $x_2\in \Bbb Z_p$  
$$|\hat g_{x_2}(m)| = c \quad \forall  m\in \Bbb Z_p, $$ 
where $c:=c({x_2})$ is a constant depending on $x_2$. 
Thus for $a\neq 0$ 
\begin{align*} 
  \sum_m |\hat g_{x_2}(m)|^2 \chi(-am) = 0 .
  \end{align*} 
This is equivalent to  say that 
  $\langle \hat g_{x_2}, \widehat{\tau_a g_{x_2}}\rangle_{L^2(\Bbb Z_p)} = 0$, 
  where $\tau_a f(x) := f(x-a)$. 
  Thus, by  the Parseval identity  
    $$\langle   g_{x_2},  \tau_a g_{x_2}\rangle_{L^2(\Bbb Z_p)} = 0 \quad \text{for} \ a\neq 0.  $$
  This immediately leads to 
 \begin{align}\label{eq:2}\langle   g,  \chi_{(0,b)}\tau_{(a,0)} g\rangle_{L^2(\Bbb Z_p^2)} = 0 \quad \forall a\neq 0 . 
 \end{align}
  
  From the  orthogonality relations in (\ref{eq:1}) and (\ref{eq:2})  we deduce that the set $\mathcal G(g, A, B)$ is mutual  orthogonal in $L^2(\Bbb Z_p^2)$. The completeness  is a conclusion of the cardinality. 
\end{proof}

\begin{example}
Here we construct an example of Gabor window satisfying (a) and (b) in Theorem  \ref{Characterization of Gabor window for lattices} and $g$ is not an indicator function. 
%Let  $P:\Bbb Z_p\to \Bbb C$ be a non-degenerate polynomial of degree $l << p$.   For any $k\in \Bbb Z_p^*$, let $Z(P-k)$ be as introduced in the proof of Theorem \ref{counter example}, and  $Z(P-k)^c:= \Bbb Z_p\setminus Z(P-k)$. 
%
  Define   $g:\Bbb Z_p^2\to \Bbb C$ as follows: For any $(k,r)\in \Bbb Z_p^2$

  $$ 
  g(k,r): = 
  \begin{cases} 
  1 & \text{if}  \ \  k=0 \\
  1+ \sqrt{p}  & \text{if}  \ \  k\neq 0 \  \ \text{and}  \ \  kRp \\
    1 -\sqrt{p}  & \text{if}  \ \  k\neq 0 \ \  \text{and} \ \  kNp .
  \end{cases}
    $$
       
 (In above, by $kRp$ and $kNp$ we mean  $k$ is quadratic residue or  non-residue (mod $p$), respectively.) 
     
   The definition indicates that the all rows of the matrix $[g(k,r)]_{p\times p}$   are the same, where  $(k,r)\in \Bbb Z_p\times \Bbb Z_p$ and $g(k,r)$ indicates the entry in $k$-th row and $r$-th column. Therefore, 
 $g$ satisfies both conditions (a) and (b) in Theorem \ref{Characterization of Gabor window for lattices}. Moreover, 
 for any $(m,n)\in \Bbb Z_p^2$

  $$ 
  \widehat g(m,n): = 
  \begin{cases} 
  1 & \text{if}  \ \  m=0, n=0 \\
   p^{-1/2} \sum_{t\in \Bbb Z_p} \chi(-mt^2) & \text{if}  \ \  m\neq 0, n=0  \\ 
  0 &  \text{elsewhere.}  
  \end{cases}  
    $$
     
     Indeed, it is easy to see that $\widehat g(0,0) = 1$. 
     % since $\sum_{k\in \Bbb Z_p} |Z(P-k)|= p$ {\color{red} (need a reference?)} and the  only  solution for $P(t)=0$ in $\Bbb Z_p$ is $t=0$. 
For $m\neq 0$, by the definition of Fourier transform,  a direct calculation shows that 
     
     $$\widehat g(m,0) = p^{-1/2} \sum_{k\in \Bbb Z_p} |Z(t^2-k)| \chi(-mk),  $$
     
    % The fact that $\widehat g(m,n)=0$  elsewhere follows form 
   %$$\sum_{r\in \Bbb Z_p} \chi(ar) =0 \quad \forall 
    %a\neq 0 \ \text{\color{red} (need a proof?)} .$$ 
     %
    % \medskip 
     where $|Z(t^2-k)|=1$ if $k=0$,     $|Z(t^2-k)|=2$ if $kRp$ and $k\neq 0$,  and  $|Z(t^2-k)|=0$ if $kNp$ and $k\neq 0$. 
     
Notice that neither $g$ nor $\widehat g$ is an indicator function. 
\end{example} 
 \medskip 
 
 We conclude the paper with the following open problems: 
 
 \medskip 
 
\begin{question}
 Is there any Gabor window $g\in L^2(\Bbb Z_p^d)$ with support $E$  for which  the all following properties hold simultaneously?

 \begin{itemize} 
 \item  $g$ is  not  in form of product of functions of single variable. 
 \item $g$ is not positive. 
 \item $|g|\neq c1_E$, i.e.  is not multiple of any characteristic function. 
 \item $|\hat g|\neq c1_F$, i.e.  is not multiple of any characteristic function. 
 \item $|E|\neq |B|$  (thus $E$ does not tile). 
 \item $E$ does not tile (thus $|E|\neq |B|$). 
 \end{itemize}
 \end{question}

\begin{problem} Classify all 
 Gabor orthonormal bases of the form 
 
  $$\mathcal G(g, S):= \{  g(x-a) \chi(x\cdot b): \  (a,b)\in S\} $$  
  and  $S \subset \Bbb Z_p^{2d}$, $d\geq 1$. 
  \end{problem} 
 
  As mentioned earlier,   Lemma \ref{basiccount} provides a characterization of Gabor orthonormal bases for dimension $d=1$ when $S$ is separable, i.e.  $S=A\times B$.

\vskip.125in 


\begin{thebibliography}{10} 
\bibitem{Iosevich and CO} C. Aten, B. Ayachi, E. Bau, D. FitzPatrick, A. Iosevich, H. Liu, A. Lott, I. MacKinnon, S. Maimon, S. Nan, J. Pakianathan, G. Petridis, C. Rojas Mena, A. Sheikh, T. Tribone, J. Weill, C. Yu, 
{\it Tiling sets and spectral sets over finite fields}, 
Journal of Functional Analysis
Volume 273, Issue 8, 15 October 2017, Pages 2547--2577

\bibitem{Fu74} B. Fuglede, {\it Commuting self-adjoint partial differential operators and a group theoretic problem}, J. Funct. Anal. \textbf{16} (1974), 101-121.

%\bibitem{HIPRV15} D. Haessig, A. Iosevich, J. Pakianathan, S. Robins and L. Vaicunas, {\it Tiling, circle packing and exponential sums over finite fields}, preprint. 

%\bibitem{IKT03} A. Iosevich, N. Katz and T. Tao, {\it The Fuglede spectral conjecture holds for convex planar domains}, Math. Res. Lett. \textbf{10} (2003), no. 5-6, 559-569.

\bibitem{IMP15} A. Iosevich, A. Mayeli and J. Pakianathan, {\it The Fuglede Conjecture holds in $\Bbb Z_p\times \Bbb Z_p$}, Analysis \& PDE, Vol. 10, No. 4, 2017. 

%\bibitem{IMP12} A. Iosevich, H. Morgan and J. Pakianathan, {\it On directions determined by subsets of vector spaces over finite fields},  Integers \textbf{11} (2011). 

%\bibitem{IMS12} A. Iosevich, M. Mourgoglou and S. Senger, {\it On sets of directions determined by subsets of ${\Bbb R}^d$} J. Anal. Math. \textbf{116} (2012), 355-369. 

\bibitem{KM06} M. Kolountzakis and M. Matolcsi, {\it Tiles with no spectra}, Forum Math. \textbf{18} (2006), no. 3, 519-528.


\bibitem{Liu-Wang} Y. Liu, Y. Wang, {\it The Uniformity of Non-Uniform Gabor Bases}, Advances in Computational
Mathematics February 2003, Volume 18, Issue 2, pp 345--355.

\bibitem{Stein_Shakarchi_book} E. Stein, R. Shakarchi, {\it Fourier Analysis: An Introduction}, Princeton Lectures in Analysis, Princeton University Press, 2003. 

\bibitem{T04} T. Tao, {\it Fuglede's conjecture is false in 5 and higher dimensions}, Math. Res. Lett. \textbf{11} (2004), no. 2-3, 251-258.
 
%\bibitem{W04} T. Wolff, {\it Lectures on harmonic analysis} Edited by Laba and Carol Shubin. University Lecture Series, \textbf{29}. American Mathematical Society, Providence, RI, (2003). 

\end{thebibliography}
\end{document}